\newtheorem{thm}{Theorem}
\newtheorem{lemma}[thm]{Lemma}
\newtheorem{cor}[thm]{Corollary}
\theoremstyle{remark}
\newtheorem{remark}[thm]{Remark}
\numberwithin{figure}{section}
\numberwithin{equation}{section}
\newcommand{\CP}{\mathbb{CP}\vphantom{\mathbb{P}}^2}
\title{On line arrangements with odd multiplicities}
\author{Marco Golla}
\email{marco.golla@univ-nantes.fr}
\address{CNRS and Laboratoire de Math{\'e}matiques Jean Leray, Nantes Universit\'e, France}
\date{}
\begin{document}

\begin{abstract}
We give restrictions on the weak combinatorics of line arrangements with singular points of odd multiplicity using topological arguments on locally-flat spheres in 4-manifolds. As a corollary, we show that there is no line arrangement comprising 13 lines and with only triple points.
\end{abstract}

\maketitle

A classical result due to Gallai asserts that any non-trivial real line arrangement must have at least a double point. This fact, known as the Sylvester--Gallai theorem, was strengthened and generalised to pseudoline arrangements by Melchior~\cite{Melchior}. By contrast, complex line arrangements \emph{can} lack double points: for instance, the arrangement defined by the polynomial $(x^3-y^3)(y^3-z^3)(z^3-x^3)$, known as the \emph{dual Hesse arrangement}, comprises 9 lines meeting at 12 triple points. 
Using the Bogomolov--Miyaoka--Yau inequality, Hirzebruch has proven that every non-trivial complex line arrangement must contain at least a double or a triple point~\cite{Hirzebruch}. It has been speculated that the pencil of degree $3$ and the dual Hesse arrangement are the only complex line arrangements to have only triple points~\cite[Question~3]{Urzua}.

\begin{thm}\label{t:main}
Let $L$ be a locally-flat line arrangement of degree $d$ with only triple points. Then $d \equiv 1, 3, 9, 19 \pmod{24}$.
\end{thm}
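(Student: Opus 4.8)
The plan is to reduce the statement to a single congruence via $4$-manifold topology: blow up the triple points, tube the resulting disjoint spheres into one locally-flat characteristic sphere $F$, and feed $F$ into a Rokhlin-type (Guillou--Marin) congruence, which converts a signature computation into the desired condition on $d$. I would begin by recording the elementary combinatorics. Writing $t_3$ for the number of triple points, counting pairs of lines gives $\binom{d}{2}=3t_3$; fixing one line $L_i$, each of the other $d-1$ lines meets it at a triple point, and each triple point on $L_i$ accounts for exactly two of those lines, so $L_i$ passes through $(d-1)/2$ triple points. In particular $d$ is odd and $t_3=d(d-1)/6$.

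Next I would blow up the $t_3$ triple points to obtain $X=\CP\,\#\,t_3\CPbar$, with standard orthogonal basis $h,E_1,\dots,E_{t_3}$ and $\sigma(X)=1-t_3$. The proper transform $\tilde L_i$ of the $i$-th line is a locally-flat sphere in class $h-\sum_{p\in L_i}E_p$, and since any two lines meet only at a (now blown-up) triple point, the spheres $\tilde L_1,\dots,\tilde L_d$ are pairwise disjoint. As $X$ is connected, I can tube these $d$ disjoint spheres together along arcs in the complement of all the $\tilde L_i$ and $E_p$ into a single locally-flat sphere $F$ with
\[
[F]=\sum_{i=1}^d[\tilde L_i]=d\,h-3\sum_{p}E_p .
\]
Because $d$ and $3$ are odd, every coefficient of $[F]$ in the diagonal basis is odd, so $[F]$ is characteristic.

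The key step is then the Guillou--Marin congruence in the locally-flat category: for a characteristic locally-flat sphere $F$ in a closed oriented $4$-manifold $X$ the Arf contribution vanishes, since $H_1(F;\F)=0$, yielding $\sigma(X)\equiv[F]^2\pmod{16}$. Computing $[F]^2=d^2-9t_3$ and $\sigma(X)=1-t_3$, this reads $d^2-1\equiv 8t_3\pmod{16}$; as $d$ is odd, $d^2-1$ is divisible by $8$, so this is the statement $(d^2-1)/8\equiv t_3\pmod 2$. Substituting $t_3=d(d-1)/6$ and clearing denominators (multiplying by $24$) turns it into $(d-1)(d-3)\equiv 0\pmod{48}$. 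Splitting $48=16\cdot 3$ and applying the Chinese remainder theorem, the factor $3$ forces $d\not\equiv 2\pmod 3$ while the factor $16$ forces $d\equiv 1,3\pmod 8$; together these are equivalent to $d\equiv 1,3,9,19\pmod{24}$, as claimed. (As a check, $d=7,13,15,21$ give $(d-1)(d-3)\equiv 24\pmod{48}$ and are excluded.)

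I expect the genuine difficulty to be topological rather than arithmetic. Everything downstream of the assertion ``$F$ is a characteristic locally-flat sphere in class $d\,h-3\sum_p E_p$'' is the short computation above. The work lies in making the geometric constructions rigorous in the locally-flat setting: specifying the local model at a triple point so that a topological blow-up separates the three branches into proper transforms meeting the exceptional sphere in three distinct points, checking that the tubing can be performed disjointly from the $E_p$ and preserves local flatness, and verifying that the hypotheses of the locally-flat version of the Guillou--Marin/Freedman--Kirby congruence are genuinely satisfied by $F$. This is precisely the ``topological arguments on locally-flat spheres'' advertised in the abstract, and it is where I would concentrate the careful arguments.
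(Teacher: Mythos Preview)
Your proposal is correct and follows essentially the same route as the paper: blow up the triple points, observe that the proper transforms are disjoint locally-flat spheres whose sum is characteristic, tube them into a single sphere, and apply the locally-flat Kervaire--Milnor congruence (the paper cites Lee--Wilczy\'nski; your Guillou--Marin with vanishing Arf for a sphere is the same statement). The only cosmetic differences are that the paper packages the congruence as a general lemma for arrangements with all odd multiplicities and then specialises, and it computes $[F]^2$ as $\sum_i(1-p_i)=d-3t_3$ rather than $d^2-9t_3$ (these agree precisely because $t_3=d(d-1)/6$); also note that the mod~$3$ part of your $(d-1)(d-3)\equiv 0\pmod{48}$ is already forced by the integrality of $t_3$ and is not additional information from the topology.
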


Here by \emph{locally-flat} we mean that each line in the arrangement is a locally-flatly embedded sphere of self-intersection 1, and that every two lines intersect transversely and positively exactly once.

If an arrangement of degree $d$ has only triple points, then $d\equiv 1,3 \pmod 6$ by an easy counting argument. 
Therefore, Theorem~\ref{t:main} covers half of the possible degrees in which these line arrangements can exist, including in particular $d=13$ (which was the smallest previously unknown case).
The case $d=7$ is the Fano plane: that it is not realised as a complex line arrangements requires an easy elementary argument; see~\cite{RubermanStarkston} for the case of locally-flat line arrangements.

\begin{cor}
There is no pseudoline arrangement, symplectic or complex line arrangement of degree $13$ with only triple points.\hfill $\qed$
\end{cor}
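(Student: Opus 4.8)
The plan is to deduce the corollary from Theorem~\ref{t:main} by exhibiting each of the three kinds of arrangement — complex, symplectic, and pseudoline — as, or as giving rise to, a locally-flat line arrangement of the same degree and with the same singular points, and then invoking the arithmetic observation that $13 \not\equiv 1,3,9,19 \pmod{24}$. Once this is in place, Theorem~\ref{t:main} rules out any locally-flat line arrangement of degree $13$ with only triple points, and hence each of the three more rigid structures.

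The complex case is immediate. A complex line in $\CP$ is a smoothly, in particular locally-flatly, embedded sphere in the hyperplane class $h=[\mathbb{CP}^1]$, so of self-intersection $h\cdot h = 1$; and two distinct complex lines meet transversely in a single point, since $h\cdot h=1$, at which the intersection is positive because holomorphic curves always intersect positively. Thus a complex line arrangement is a locally-flat line arrangement with the same combinatorics, and the complex case follows. The symplectic case is analogous: a symplectic line is a symplectically, hence smoothly, embedded sphere in the class $h$, of self-intersection $1$, and two symplectic lines of an arrangement intersect transversely and positively — either by the defining hypothesis of a symplectic arrangement, or after a compatible perturbation making them pseudoholomorphic for some $\omega$-tame almost-complex structure and applying positivity of intersections. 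Either way a symplectic line arrangement is a locally-flat line arrangement, and the symplectic case follows.

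The remaining, and genuinely topological, case is that of pseudoline arrangements, where one must build the locally-flat model by hand. View $\mathbb{RP}^2$ as the fixed locus of complex conjugation in $\CP$; the standard real line $\mathbb{RP}^1 \subset \mathbb{RP}^2$ is then the real locus of a complex line, which meets $\mathbb{RP}^2$ exactly along $\mathbb{RP}^1$. A pseudoline is an embedded, non-contractible circle $C \subset \mathbb{RP}^2$, hence isotopic to $\mathbb{RP}^1$; choosing an isotopy of $\mathbb{RP}^2$ carrying $\mathbb{RP}^1$ to $C$ and extending it to an ambient isotopy of $\CP$ transports the standard complex line to a locally-flat sphere of self-intersection $1$ meeting $\mathbb{RP}^2$ along $C$. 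The main obstacle is to carry out this promotion \emph{simultaneously} for all the pseudolines of the arrangement while keeping the pairwise intersections transverse, positive, and in bijection with the crossings of the diagram — in particular matching each triple point of pseudolines with a triple point of spheres. This demands a controlled local model near each crossing and is precisely the locally-flat realisation established in~\cite{RubermanStarkston}. Granting it, a pseudoline arrangement of degree $13$ with only triple points would produce a locally-flat line arrangement of degree $13$ with only triple points, contradicting Theorem~\ref{t:main}. This settles all three cases and proves the corollary.
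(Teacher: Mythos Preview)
Your proposal is correct and follows the same approach the paper itself takes: the paper simply marks the corollary with a $\qed$, treating it as an immediate consequence of Theorem~\ref{t:main} together with the fact that complex, symplectic, and pseudoline arrangements are (or give rise to) locally-flat line arrangements with the same combinatorics, and that $13 \not\equiv 1,3,9,19 \pmod{24}$. Your write-up supplies the details the paper leaves tacit, including the appeal to~\cite{RubermanStarkston} for the pseudoline case.
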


Recently, the corollary has been proved for complex line arrangements by K\"uhne, Szemberg, and Tutaj-Gasi\'nska~\cite{KuhneSzembergTG}. They used the enumeration of all possible matroids with the combinatorial type of a line arrangement of 13 lines and 26 triple points, and showed that none of them is realised over \emph{any field}.

Theorem~\ref{t:main} is a direct consequence of the following more general statement.

\begin{thm}\label{t:odd}
Let $L \subset \CP$ be a locally-flat line arrangement of total degree $d$ whose singular points have only odd multiplicities. Call $t_m$ the number of singular points of $L$ of multiplicity $m$.
Then
\[
\sum (m-1)t_m \equiv d-1 \pmod{16}.
\]
\end{thm}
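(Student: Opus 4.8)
\section*{Proof proposal}

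The plan is to produce a locally-flat, embedded, \emph{characteristic} surface with trivial mod-$2$ first homology inside a blow-up of $\CP$, and then to read the congruence off the Guillou--Marin formula, in which the Brown-invariant term will then simply vanish. First I would record two elementary facts. Since there are no double points (multiplicity $2$ is even), every pair of lines meets at a singular point of odd multiplicity $\geq 3$, and counting incidences of pairs gives the \emph{exact} identity $\sum_m m(m-1)t_m = d(d-1)$. Moreover, fixing one line $\ell$ and partitioning the other $d-1$ lines according to the singular point of $\ell$ through which they pass, one gets $d-1 = \sum_{p\in\ell}(m_p-1)$, a sum of even numbers; hence $d$ is odd. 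In particular $[L]=dH$ is characteristic in $\CP$, where $H$ denotes the hyperplane class.

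Next I would blow up $\CP$ at every singular point of $L$, obtaining $X=\CP\,\#\,N\,\CPbar$ with $N=\sum_m t_m$ and exceptional classes $E_p$. Each proper transform $\tilde\ell_i$ is a locally-flat sphere of class $H-\sum_{p\in\ell_i}E_p$, and — this is the crucial point — any two lines met at a single (now blown-up) singular point, so the transforms $\tilde\ell_i$ are pairwise \emph{disjoint}. Thus $F:=\bigsqcup_i\tilde\ell_i$ is a locally-flat, embedded, closed surface with $[F]=dH-\sum_p m_pE_p$. Its intersection with $H$ is $d$ (odd), and with each $E_p$ it is $m_p$ (odd, by hypothesis), so $[F]$ is characteristic in $X$; and because $F$ is a disjoint union of spheres, $H_1(F;\F)=0$ (should one want the connected version of Guillou--Marin, one may first tube the components together along a tree of unknotted bands, which produces a single sphere and introduces no new $1$-cycles).

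Feeding $F$ into the Guillou--Marin congruence $\operatorname{sign}(X)\equiv [F]\cdot[F]+2\beta(F)\pmod{16}$, the Brown invariant $\beta(F)$ vanishes since $H_1(F;\F)=0$. Using $\operatorname{sign}(X)=1-N$ and $[F]\cdot[F]=d^2-\sum_m m^2t_m$, this becomes
\[
1-\sum_m t_m\equiv d^2-\sum_m m^2 t_m \pmod{16},
\]
that is, $\sum_m(m^2-1)t_m\equiv d^2-1\pmod{16}$. Subtracting the exact identity $\sum_m(m^2-m)t_m=d^2-d$ from above leaves precisely $\sum_m(m-1)t_m\equiv d-1\pmod{16}$, which is the claim. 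Note how the odd-multiplicity hypothesis is used twice and in an essential way: it forces $d$ odd (so that $[L]$ is characteristic downstairs) and makes $[F]\cdot E_p=m_p$ odd (so that $[F]$ is characteristic upstairs).

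The hard part will be the topology underlying the blow-up in the locally-flat category: one must check that a transverse, positive crossing of $m$ locally-flat sheets has the standard local model, so that the topological blow-up genuinely separates them into disjoint spheres, each meeting $E_p$ transversely once, thereby yielding the stated homology classes. I would also need to invoke the Guillou--Marin congruence for locally-flat characteristic surfaces, which is legitimate through the Freedman--Quinn topological machinery. Once these foundational points are in place, the remaining steps are the routine homological and arithmetic manipulations carried out above.
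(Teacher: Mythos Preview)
Your proposal is correct and follows essentially the same strategy as the paper: blow up at all singular points, observe that the disjoint union of proper transforms is a characteristic collection of locally-flat spheres, tube them together, and apply the Kervaire--Milnor congruence (the paper cites the locally-flat version due to Lee--Wilczy\'nski, which is precisely your Guillou--Marin formula with $\beta=0$ since $H_1(F;\F)=0$). The only cosmetic difference is the arithmetic: the paper computes $[F]\cdot[F]=\sum_k(1-p_k)=d-\sum_m m\,t_m$ directly from the individual self-intersections and reads off the result in one line, whereas you compute $[F]\cdot[F]=d^2-\sum_m m^2 t_m$ from the homology class and then subtract the exact pair-counting identity $\sum_m m(m-1)t_m=d(d-1)$ --- one extra but harmless step.
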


\subsection*{Notation and background} Homology is taken with integer coefficients. The intersection product and the signature of a 4-manifold are denoted by $\cdot$ and $\sigma$, respectively. We refer to~\cite{GS} or~\cite{Kirby} for background on 4-dimensional topology.

\vskip 0,3cm

If $X$ is a blow-up of $\CP$ at distinct points, then we have a preferred basis of $H_2(X)$ given by the homology classes of a complex line and of the exceptional divisors. We say that a class in $H_2(X)$ is \emph{characteristic} if all of its coefficients are odd in this basis. Equivalently, a class $A \in H_2(X)$ is characteristic if $A\cdot B \equiv B\cdot B$ for each $B \in H_2(X)$.

\begin{lemma}\label{l:KM}
Let $F_1, \dots, F_d \subset X$ be a collection of disjoint, locally-flat spheres in a blow-up $X$ of $\CP$ at $t$ points, such that $\sum_k [F_k] \subset H_2(X)$ is a characteristic class. Then
\[
\sum_k F_k \cdot F_k \equiv 1-t \pmod{16}.
\]
\end{lemma}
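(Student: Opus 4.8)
The plan is to recognise the left-hand side as the self-intersection of a single characteristic surface and to apply the locally-flat version of the Guillou--Marin congruence, which generalises the Kervaire--Milnor theorem on characteristic spheres (hence the name of the lemma) and from which the desired formula follows after computing the signature of $X$.

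First I would record the ambient data. A blow-up $X$ of $\CP$ at $t$ distinct points is diffeomorphic to $\CP \# t\CPbar$; it is smooth and simply connected, its intersection form is diagonal of rank $1+t$, and its signature is $\sigma(X) = 1-t$. In particular $X$ is smooth, so its Kirby--Siebenmann invariant vanishes and no topological correction term will intervene in the congruence below. Next I would assemble the spheres into one surface. Since the $F_k$ are pairwise disjoint, their union $\Sigma = \bigsqcup_k F_k$ is a locally-flatly embedded closed surface in $X$ representing $\sum_k [F_k]$, which is characteristic by hypothesis. Disjointness forces all cross terms $[F_j]\cdot[F_k]$ with $j \neq k$ to vanish, so that
\[
\Sigma \cdot \Sigma \;=\; \sum_{j,k} [F_j]\cdot[F_k] \;=\; \sum_k F_k \cdot F_k .
\]

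Now I would invoke the congruence. For a closed oriented $4$-manifold $X$ and a locally-flat characteristic surface $\Sigma$, the Guillou--Marin formula, valid in the topological locally-flat category, reads
\[
\sigma(X) \equiv \Sigma \cdot \Sigma + 8\operatorname{Arf}(X,\Sigma) \pmod{16},
\]
where $\operatorname{Arf}(X,\Sigma)$ is the Arf invariant of a $\Z/2$-quadratic refinement of the intersection form on $H_1(\Sigma;\F)$. Because each component $F_k$ is a $2$-sphere, we have $H_1(\Sigma;\F)=0$, the quadratic form is trivial, and the Arf invariant vanishes. Substituting $\sigma(X)=1-t$ and the identity above for $\Sigma\cdot\Sigma$ then yields the claimed congruence directly.

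The main point requiring care is the passage from the classical smooth Kervaire--Milnor/Guillou--Marin statement to the locally-flat setting, since the $F_k$ are only assumed topologically embedded. The hard part is thus to justify that both the quadratic refinement and the congruence are well defined and valid for locally-flat characteristic surfaces: this rests on the existence of topological normal bundles for locally-flat surfaces in $4$-manifolds and on the topological proof of the Rokhlin-type congruence due to Freedman and Quinn. The only potential extra contribution in the topological category is a Kirby--Siebenmann term $8\operatorname{ks}(X)$, which is absent here precisely because $X$ is smooth. With the disconnectedness of $\Sigma$ and the vanishing of its Arf invariant handled as above, the congruence specialises exactly to $\sum_k F_k\cdot F_k \equiv 1-t \pmod{16}$.
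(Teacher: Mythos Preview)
Your argument is correct and follows the same underlying idea as the paper: identify $\sum_k F_k\cdot F_k$ with the self-intersection of a characteristic surface and invoke a Rokhlin-type congruence, using that $X$ is smooth so no Kirby--Siebenmann term appears.

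The only difference is in packaging. The paper first \emph{tubes} the disjoint spheres together to obtain a single locally-flat sphere $F$ with $[F]=\sum_k[F_k]$ and $F\cdot F=\sum_k F_k\cdot F_k$, and then applies the locally-flat Kervaire--Milnor theorem for characteristic \emph{spheres} (Lee--Wilczy\'nski). You instead keep the disjoint union $\Sigma=\bigsqcup_k F_k$ as a disconnected surface and appeal to the more general Guillou--Marin congruence, observing that $H_1(\Sigma;\F)=0$ forces the Arf term to vanish. Both routes are valid: tubing buys you the slightly more elementary sphere statement, while your version avoids the (easy) tubing construction at the cost of citing the stronger surface congruence; since the Arf invariant is trivially zero here, the two are essentially equivalent.
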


\begin{proof}
Tube the spheres $F_1, \dots, F_d$ together, to obtain a locally-flat sphere $F$ with
\[
[F] = \sum_k [F_k] \in H_2(X)
\]
and
\[
F\cdot F = \sum_k F_k \cdot F_k.
\]
Now, $F$ is characteristic and, since $X$ is a smooth manifold, its Kirby--Siebenmann invariant vanishes. Since $X$ is simply-connected, by the locally-flat version the Kervaire--Milnor theorem~\cite{KervaireMilnor}, due to Lee and Wilczy{\'n}ski~\cite{LeeWilczynski}, we have
\[
\sum_k F_k \cdot F_k = F\cdot F \equiv \sigma(X) = 1-t \pmod{16},
\]
which proves the statement.
\end{proof}

\begin{proof}[Proof of Theorem~\ref{t:odd}]
Suppose that $L = L_1 \cup \dots \cup L_d$ is a locally-flat realisation of an arrangement as in the statement. Since the multiplicities of all its singular points are odd, each line intersects an even number of lines, so $d$ is odd. Call $p_k$ the number of singular points on the line $L_k$.

Blow up $\CP$ at all the singular points of $L$, to obtain a configuration of surfaces in a blow-up $X$ of $\CP$ at $t := t_3+t_5+\dots+t_{d-2}+t_d$ points.

First, we observe that the strict transform of $L$ comprises $d$ pairwise disjoint locally-flat spheres, $F_1, \dots, F_d$, of self-intersection $1-p_1, \dots, 1-p_d$.

Since we have only blown up at points of odd multiplicity, the sum of the homology classes $[F_1],\dots,[F_d]$ is characteristic. By Lemma~\ref{l:KM}, we have:
\[
d - \sum_k p_k = \sum_k (1-p_k) \equiv 1-t \pmod{16}.
\]
Recall that $t = \sum_m t_m$ and note that
\[
\sum_k p_k = \sum_m m t_m,
\]
so that
\[
\sum_m (m-1)t_m = \sum_m m t_m - t =  \sum_k p_k - t \equiv d-1 \pmod{16}.\qedhere
\]
\end{proof}

\begin{remark}
Theorem~\ref{t:odd} generalises to configurations of rational curves whose singularities have only odd multiplicities. It also extends to certain classes of collections of spheres with conical singularities that are modelled over plane curve singularities whose multiplicity sequence only has odd entries. In turn, this generalises an obstruction for rational cuspidal curves described in~\cite[Proposition~4.2.4]{GKutle}.
\end{remark}

\begin{proof}[Proof of Theorem~\ref{t:main}]
If $L$ is a locally-flat realisation of a line arrangement of degree $d$ with only triple points, $t_3 = \frac{d(d-1)}6$, which implies that $d \equiv 0,1 \pmod 3$. To prove the theorem, it suffices to show that $d \equiv 1,3 \pmod 8$.

Applying Theorem~\ref{t:odd} we obtain:
\[
2\cdot \frac{d(d-1)}6 = (3-1)\cdot t_3 \equiv d-1 \pmod{16},
\]
which is equivalent to asking that $(d-1)(d-3) \equiv 0 \pmod{16},$ which in turn implies $d\equiv 1,3 \pmod 8$.
\end{proof}

\subsection*{Acknowledgements} I would like to thank the participants of the workshop \emph{Complex and symplectic curve configurations} for drawing my interest to this question, and especially Piotr Pokora for several stimulating conversations and for his comments on an earlier draft. I would also like to thank Paolo Aceto, Erwan Brugall\'e, Lukas K\"uhne, Danny Ruberman, and Giancarlo Urz\'ua for their input.

{\small
\bibliography{triplepoints}

\providecommand{\bysame}{\leavevmode\hbox to3em{\hrulefill}\thinspace}
\providecommand{\MR}{\relax\ifhmode\unskip\space\fi MR }
\providecommand{\MRhref}[2]{%
  \href{http://www.ams.org/mathscinet-getitem?mr=#1}{#2}
}
\providecommand{\href}[2]{#2}
\begin{thebibliography}{KST24}

\bibitem[GK23]{GKutle}
Marco Golla and Fabien K{\"u}tle, \emph{Symplectic isotopy of rational cuspidal
  sextics and septics}, Int. Math. Res. Not. \textbf{2023} (2023), no.~8,
  6504--6578.

\bibitem[GS99]{GS}
Robert~E. Gompf and Andr{\'a}s~I. Stipsicz, \emph{{$4$}-manifolds and {K}irby
  calculus}, Graduate Studies in Mathematics, vol.~20, American Mathematical
  Society, Providence, RI, 1999.

\bibitem[Hir83]{Hirzebruch}
Friedrich Hirzebruch, \emph{Arrangements of lines and algebraic surfaces},
  Arithmetic and geometry, {Pap}. dedic. {I}. {R}. {Shafarevich}, {Vol}. {II}:
  {Geometry}, {Prog}. {Math}., vol.~36, 1983, pp.~113--140.

\bibitem[Kir89]{Kirby}
Robion~C. Kirby, \emph{The topology of 4-manifolds}, Lect. Notes Math., vol.
  1374, Springer-Verlag, 1989.

\bibitem[KM61]{KervaireMilnor}
M.~A. Kervaire and John~W. Milnor, \emph{On 2-spheres in 4-manifolds}, Proc.
  Natl. Acad. Sci. USA \textbf{47} (1961), 1651--1657.

\bibitem[KST24]{KuhneSzembergTG}
Lukas K\"uhne, Tomasz Szemberg, and Halszka {{T}utaj-Gasi\'nska}, \emph{Line
  arrangements with many triple points}, preprint
  \href{arXiv.org/abs/2401.14766}{arXiv:2401.14766}, 2024.

\bibitem[LW90]{LeeWilczynski}
Ronnie Lee and Dariusz~M. Wilczy{\'n}ski, \emph{Locally fiat 2-spheres in
  simply connected 4-manifolds}, Comment. Math. Helv. \textbf{65} (1990),
  no.~3, 388--412.

\bibitem[Mel41]{Melchior}
E.~Melchior, \emph{{\"U}ber {Vielseite} der projektiven {Ebene}.}, Deutsche
  Math. \textbf{5} (1941), 461--475.

\bibitem[RS19]{RubermanStarkston}
Daniel Ruberman and Laura Starkston, \emph{Topological realizations of line
  arrangements}, Int. Math. Res. Not. IMRN \textbf{2019} (2019), no.~8,
  2295--2331.

\bibitem[Urz22]{Urzua}
Giancarlo Urz\'ua, \emph{Some open questions about line arrangements in the
  projective plane}, Proyecciones \textbf{41} (2022), no.~2, 517--536.

\end{thebibliography}
\bibliographystyle{amsalpha}\itemsep-2pt
}

\end{document}